\DeclareMathOperator{\sinc}{\mathrm{sinc}}
\DeclareMathOperator{\proj}{\mathsf{proj}}
\DeclareMathOperator{\lspan}{\mathsf{span}}
\newcommand{\ideal}[1]{{\langle{#1}\rangle}}
\newtheorem{claim}{Claim}
\newtheorem{corollary}{Corollary}
\newtheorem{definition}{Definition}
\title{Quasicyclic Principal Component Analysis}
\author{Susanna E.~Rumsey \IEEEmembership{Graduate Student Member, IEEE}, Stark C.~Draper \IEEEmembership{Senior Member, IEEE}, and Frank~R.~Kschischang \IEEEmembership{Fellow, IEEE}\thanks{The authors are with the Edward S.~Rogers Sr.~Department of Electrical and Computer Engineering, University of Toronto.}}
\begin{document}
\maketitle

\begin{abstract}
We present quasicyclic principal component analysis (QPCA), a generalization of principal component analysis (PCA), that determines an optimized basis for a dataset in terms of families of shift-orthogonal principal vectors.  This is of particular interest when analyzing cyclostationary data, whose cyclic structure is not exploited by the standard PCA algorithm.  We first formulate QPCA as an optimization problem, which we show may be decomposed into a series of PCA problems in the frequency domain.  We then formalize our solution as an explicit algorithm and analyze its computational complexity.  Finally, we provide some examples of applications of QPCA to cyclostationary signal processing data, including an investigation of carrier pulse recovery, a presentation of methods for estimating an unknown oversampling rate, and a discussion of an appropriate approach for pre-processing data with a non-integer oversampling rate in order to better apply the QPCA algorithm.
\end{abstract}
\begin{IEEEkeywords}
Principal Component Analysis, PCA, signal processing, cyclostationarity
\end{IEEEkeywords}

\section{Introduction}

\IEEEPARstart{P}{rincipal} component analysis (PCA) is widely used throughout data science and statistics.  It provides the user with a set of orthogonal axes --- ``principal components" --- that best capture the energy of a data set.  

PCA is often used when approximating higher-dimensional data.  PCA components are ordered by the successive amounts of data energy they capture.  If we approximate the data by its projection onto the first $k$ PCA components, then, in terms of energy captured, we obtain an optimal $k$-dimensional approximation of the data.

Standard PCA does not take into account any knowledge of structure of the data.  Various generalizations and variations of PCA have been developed to address certain types of data.  These include singular spectrum analysis and related techniques for time-series data, both stationary~\cite[chap.~12]{jolliffe02} and non-stationary~\cite{hamilton24} as well as high-frequency data~\cite{aitsahalia15}.  Takane~\cite{takane13} provides an extensive description of constrained PCA, which allows the user to decompose the data matrix using external information about the data, and then create a solutions from PCA performed on the components of the decomposition.  Dai and Müller extend PCA to the case where the data lie on a general Riemannian manifolds (including the particular example of data on a sphere)~\cite{dai17}.

This paper introduces a generalization of PCA, which we name quasicyclic PCA (QPCA).  In QPCA, instead of seeking a set of vectors that best capture the energy of the experimental data, we seek families of shift-orthogonal vectors that best capture the energy.

Our approach is useful in experimental applications where the user has reason to believe that the underlying data is cyclostationary, i.e., that the statistics of each period of the data are the same.  We are particularly interested in applications to communications engineering, as information signals created by modulating symbols using shift-orthogonal pulses have such cyclostationary statistics.  However, cyclostationary data appears in many other fields, including dynamics, econometrics, astronomy, and biology.  For an extensive bibliography on this subject, see ~\cite[chap.~10]{napolitano20}.

We borrow the term ``quasicyclic" from algebraic coding theory to describe our approach to applying PCA to cyclostationary data.  In a quasicyclic code, every circular shift of a codeword by a multiple of some number of indices $s$ is also a codeword.  (See, e.g.~\cite{chen69} for an extensive discussion.)  In much the same way, as our data is cyclostationary, we consider circular shifts of our data vectors by some amount $s$ also to be relevant data.

As an example, consider Figure~\ref{fig:intro_ex}.  Here, we use the length 54 pulse shown in Figure~\ref{fig:pulse} to modulate $m=100$ data signals, each consisting of $N=6$ PAM-4 symbols (Figure~\ref{fig:data} gives an example).  This pulse is orthogonal to its circular shifts by multiples of 9 samples.  Performing PCA on this data and extracting the $N=6$ most significant orthogonal components (see Figure~\ref{fig:pca}) does not tell us much about the structure of the underlying signals.  However, QPCA recovers a size-$N$ family of orthogonal 9-shifts of a pulse, each of which closely approximates a shift of the original pulse (see Figure~\ref{fig:qpca}).

\begin{figure}[!t]
	\centering
	\begin{subfigure}[b]{0.4\textwidth}
		\centering
		\includegraphics[width=\textwidth]{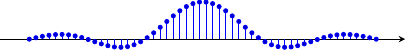}
		\caption{A pulse $\phi$ of length 54.  It is orthogonal to any $\tilde{\phi}$ formed by circularly shifting $\phi$ by any multiple of 9 time-steps (``9-shifts").}
		\label{fig:pulse}
	\end{subfigure}
	\hfill
	\begin{subfigure}[b]{0.4\textwidth}
		\centering
		\includegraphics[width=\textwidth]{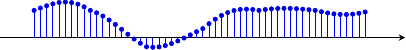}
		\caption{Example of data formed from a linear combination of $N=6$ circular shifts of $\phi$.  Our data set consists of $m=100$ such curves.}
		\label{fig:data}
	\end{subfigure}
	\hfill
	\begin{subfigure}[b]{0.4\textwidth}
		\centering
		\includegraphics[width=\textwidth]{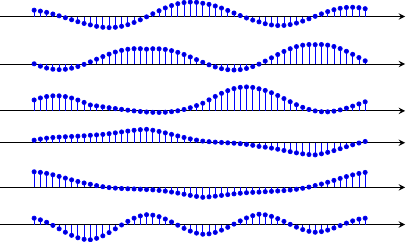}
		\caption{The first $N=6$ components obtained by performing regular PCA on our data.}
		\label{fig:pca}
	\end{subfigure}
	\hfill
	\begin{subfigure}[b]{0.4\textwidth}
		\centering
		\includegraphics[width=\textwidth]{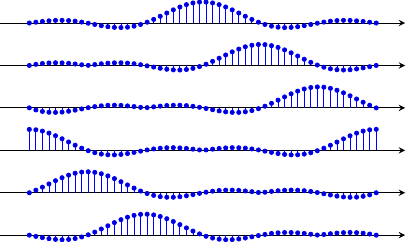}
		\caption{The first component and its $N=6$-dimensional family of orthogonal shifts obtained by performing QPCA on our data.  Each is close to a 9-shift of $\phi$.}
		\label{fig:qpca}
	\end{subfigure}
	\caption{An illustration of QPCA.  In~(a), a shift-orthogonal pulse $\phi$ is used to create data consisting of $n=100$ runs of signals containing $N=6$ modulated symbols each, such as that shown in~(b).  The first $N$ orthogonal PCA components are shown in~(c), and have a complicated form that tells the user little about the structure of the data.  In contrast, the QPCA results in~(d) give the first shift-orthogonal component (along with its orthogonal family of shifts), which accurately corresponds to the underlying pulse $\phi$.}
	\label{fig:intro_ex}
\end{figure}

This simple example illustrates the potential usefulness of QPCA, which will be fully described and formalized in the sections below.  The rest of the paper is structured as follows: Section~\ref{sec:PCA} provides a mathematical description of standard PCA as background.  Section~\ref{sec:problem} describes the formulation of QPCA as an optimization problem.  Section~\ref{sec:solution} derives the solution to the problem.  Finally, Section~\ref{sec:apps} provides some examples uses of QPCA.

We will use the following notation throughout: signals (also referred to as vectors) are elements of $\mathbb{C}^n$, with indices starting at 0.  They are indexed modulo $n$, i.e., for a signal $x$, we understand $x(i)$ to mean the $i\bmod n$th element of $x$.  We denote the standard inner product of signals $x$ and $y$ by $\langle x,y \rangle = \sum_{i = 0}^{n - 1} x(i) y^*(i)$, and the norm of $x$ by $\|x\| = \sqrt{\langle x, x\rangle}$.  Products of signals are understood to be taken pointwise, i.e., $(xy)(i) = x(i)y(i)$.

\section{Principal Component Analysis}
\label{sec:PCA}

\noindent Suppose that we are given a set of $m$ complex signals $\{ x_1, \ldots, x_m \} \subseteq \mathbb{C}^n$, which we refer to as data vectors.  The \emph{centroid} of these data vectors is the vector $\bar{x} = \frac{1}{m} \sum_{i=1}^m x_i$. The \emph{centred data} is then $\{y_1, \dots, y_m\} = \{ x_1 - \bar{x}, \ldots, x_m - \bar{x} \}$.  We suppose that the centred data span a $d$-dimensional subspace of $\mathbb{C}^n$.  We assume $d>0$;  in most applications $d = n$.

A first \emph{principal component} of the centred data is a vector $q^{(1)} \in \mathbb{C}^n$ satisfying
\begin{equation}\label{eq:pca}
\phantom{.}q^{(1)} = \arg \max_{\| q \| =1 } \sum_{i=1}^m | \langle y_i, q\rangle |^2.
\end{equation}
The quantity $\sum_{i=1}^m | \langle y_i, q \rangle |^2$ is the sum of the energy of the signals obtained by projecting each of the centred data vectors onto~$q$.

Define the \emph{projection} of signal $x$ onto the space spanned by signal $v$ by $\proj_v(x) = \frac{ \langle x,v \rangle}{\| v \|^2} v$.  This simplifies when $\| v \| = 1$ to $\proj_v(x) = \langle x,v \rangle v$.  The $m$ error signals $y_i - \proj_{q^{(1)}}(y_i)$ are orthogonal to $q^{(1)}$ for each  $i \in \{ 1, \ldots, m \}$. A \emph{second principal component} $q^{(2)}$ is obtained as a first principal component of this collection of error signals.  Proceeding in this way, we may define, for any positive integer $i \leq d$, an $i$th principal component $q^{(i)}$ as a first principal component of
the $m$ signals
\[
\left\{
\phantom{,}y_1 - \sum_{j=1}^{i-1} \proj_{q^{(j)}} (y_1),
\;\dots, y_m - \sum_{j=1}^{i-1} \proj_{q^{(j)}} (y_m)
\right\},
\]
where an empty sum of vectors is taken to be $\mathbf{0}$.  By construction, $\{ q^{(1)}, \ldots, q^{(d)} \}$ is an orthonormal basis for the vector space spanned by the centred data vectors.

If we create an $m\times n$ matrix $\mathbf{Y}$ whose rows are the $y_i$s, it can be shown that $q^{(i)}$ is a normalized eigenvector of $\mathbf{Y}^H\mathbf{Y}$ corresponding to the $i$th largest eigenvalue.  (This is equivalent to the normalized right-singular vector of $\mathbf{Y}$ corresponding to the $i$th largest singular value of $\mathbf{Y}$.)  This so-called \emph{principal component analysis} (PCA) was developed in 1901 by Pearson~\cite{pearson01}; it was later named PCA by Hotelling in the 1930s~\cite{hotelling33}.

\section{Problem Statement}
\label{sec:problem}

\subsection{Mathematical Background}

\noindent We begin by formalizing the concept of a shift-orthonormal signal, the class of signal to which we constrain QPCA components.

\begin{definition}[Autocorrelation]\label{def:autocorr}
	The \emph{autocorrelation function} $R_x \in \mathbb{C}^n$ associated with a signal $x \in \mathbb{C}^n$ is defined pointwise for $0 \le j < n$ as
	\begin{equation}\label{eq:autocorr}
	R_x(j) = \langle x, x \circledast e_j \rangle
	= \sum_{i = 0}^{n - 1} x(i) x^*(i-j)
	\end{equation}
	where $e_j$ is the $j$th unit vector in the standard basis and $\circledast$ indicates circular convolution.
\end{definition}

Note that $x \circledast e_j$ is the circular shift of $x$ by $j$ indices, which means that the autocorrelation function returns the inner product between $x$ and the circular shift by $j$ of $x$.

\begin{definition}[Shift-orthonormal signals]\label{def:shiftorth}
	Let $s$ and $N$ be integers such that $n=Ns$. A signal $x \in \mathbb{C}^n$ is said to be \emph{$s$-quasicyclic shift-orthonormal} or an \emph{$s$-root-Nyquist pulse} if
	\[
	R_x(j) = \begin{cases}
		1, & j = 0 , \\
		0, & j = ks, \;k \ne 0.
	\end{cases}
	\]
	Such a signal is orthogonal to cyclic shifts of itself by any nonzero multiple of $s$.
\end{definition}

Let us define the Kronecker delta function as follows.

\begin{definition}
The length-$n$ Kronecker delta signal $\delta \in \mathbb{C}^n$ is defined by
\[
\phantom{.}\delta(i) = \begin{cases}
	1 & i = 0\\
	0 & i \ne 0
\end{cases}.
\]  
\end{definition}
This means that~(\ref{eq:autocorr}) can be rewritten as 
\begin{equation}\label{eq:rx}
\phantom{.}R_x \sum_{i = 0}^{N - 1}\delta\circledast e_{is} = \delta.
\end{equation}
for $0 \le j < n$.

We denote by $Q^n(s) \subseteq \mathbb{C}^n$ the set of all $s$-quasicyclic shift-orthonormal length-$n$ signals.  For any signal $q \in Q^n(s)$, we let
$\lspan^s(q) = \lspan\{q\circledast e_{ks}\}_k$ denote the vector space spanned by $q$ and its quasicyclic shifts by multiples of $s$.

We generalize the concept of projecting a signal onto a one-dimensional vector, as in standard PCA, to that of projecting onto some general subspace as follows.

\begin{definition}[Orthogonal Projection]
	Let $V$ be any subspace of $\mathbb{C}^n$, and let $x$ be any signal in $\mathbb{C}^n$.  When $V$ is $p$-dimensional (necessarily $p \leq n$) and equipped with an orthonormal basis $\{ b_1, \ldots, b_p \}$, then the orthogonal projection of $x$ on $V$ is obtained as
	\[
	\proj_V(x) = \sum_{i=1}^p \langle x,b_i \rangle b_i.
	\]
\end{definition}
It is easily verified that the orthogonal project of $x$ on $V$ has energy $\| \proj_V(x) \|^2 = \sum_{i=1}^p | \langle x, b_i \rangle |^2$.

\subsection{The QPCA Problem}

\noindent Suppose that we have centred data $\{\tilde{y}_1, \cdots, \tilde{y}_m\} \subseteq \mathbb{C}^{\tilde{n}}$, which we know has a symbol period with an oversampling rate of $s\in\mathbb{Z}$ (i.e., $s$ samples per symbol).  For each $\tilde{y}_i$, we define the \emph{length-$n$ extension} of the data as follows.  First, define $N = \lfloor\tilde{n}/s\rfloor$ and $n = Ns$.  Then define the length-$n$ data vectors $y_i \in \mathbb{C}^n$ by
\[
\phantom{.}y_i(j) = \begin{cases}
	\tilde{y}_i(j) & 0 \le j < \tilde{n}\\
	0 & \tilde{n} \le j < n
\end{cases}.
\]
We are truncating the vectors so that they have a length that is a multiple of $s$, and we call this multiple $N$.  Our new data has length $n = Ns$.  In a signal-processing context, $N \in \mathbb{Z}$ will correspond to the number of symbol periods, and $s \in \mathbb{Z}$ will correspond to the oversampling rate (i.e., the number of samples per symbol).

We wish to generalize PCA from projections on a space spanned by a single vector $q$ to projections on $\lspan^s(q)$, where $q \in Q^n(s)$ is chosen so that the sum of the energy of the projection of the centred data vectors $\{ y_1, \ldots, y_m \} \subseteq \mathbb{C}^n$ on $\lspan^s(q)$ is as large as possible.  (Setting $N=1$ recovers standard PCA.)  Accordingly, we define a first \emph{$s$-quasicyclic principal component} of the centred data to be a vector $q^{(1)} \in Q^n(s)$ satisfying
\begin{equation}
q^{(1)} =  \max_{q \in Q^n(s)} \sum_{i=1}^m \| \proj_{\lspan^s(q)}(y_i)\|^2.
\label{eq:qpca}
\end{equation}
We refer to the optimization problem (\ref{eq:qpca}) as quasicyclic principal component analysis (QPCA).  We can also think of~(\ref{eq:qpca}) as maximizing the energy of a signal that can be captured when demodulating the signal using the set of $s$-circular-shifts of pulse $q$.

\section{Solution}
\label{sec:solution}

\subsection{Definitions}

\noindent Our optimization problem will turn out to be more tractable to analyze in the frequency domain.  We therefore recall the definitions of the discrete Fourier transform and some related concepts.

\begin{definition}[Discrete Fourier Transform]
	For a positive integer $n$, let $\omega_n = e^{2 \pi \mathbf{i}/n}$, where $\mathbf{i}$ satisfies $\mathbf{i}^2 = -1$.  The \emph{discrete Fourier transform} (DFT) of order $n$ of $x \in \mathbb{C}^n$ is $\mathcal{F}(x) = \hat{x} \in \mathbb{C}^n$ where
	\[
	\hat{x}(k) = \frac{1}{\sqrt{n}} \sum_{i = 0}^{n - 1} x(i) \omega_n^{- i k}.
	\]
	The DFT is invertible as follows:
	\[
	x(i) = \frac{1}{\sqrt{n}} \sum_{k = 0}^{n - 1} \hat{x}(k) \omega_n^{ik}.
	\]
\end{definition}

When the DFT is defined in this way, it has the following useful property.

\begin{claim}[Unitarity of the DFT]
	\label{claim:DFTisUnitary}
	If 
	$x \stackrel{\mathcal{F}}{\longleftrightarrow} \hat{x}$
	and
	$y \stackrel{\mathcal{F}}{\longleftrightarrow} \hat{y}$,
	then
	$\langle x,y \rangle = \langle \hat{x},\hat{y} \rangle$.
\end{claim}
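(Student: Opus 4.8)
The plan is to prove the claim by directly expanding the right-hand inner product $\langle \hat{x}, \hat{y} \rangle$ using the definition of the DFT, interchanging the order of summation, and then collapsing the resulting triple sum via the orthogonality of the $n$th roots of unity.

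First I would write $\langle \hat{x}, \hat{y} \rangle = \sum_{k=0}^{n-1} \hat{x}(k)\hat{y}^*(k)$ and substitute the definition $\hat{x}(k) = \frac{1}{\sqrt{n}}\sum_{i=0}^{n-1} x(i)\omega_n^{-ik}$, together with the conjugated expression $\hat{y}^*(k) = \frac{1}{\sqrt{n}}\sum_{j=0}^{n-1} y^*(j)\omega_n^{jk}$, where the conjugation flips the sign of the exponent since $\omega_n^* = \omega_n^{-1}$. This produces a triple sum over the indices $i$, $j$, and $k$, carrying an overall prefactor of $1/n$.

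Next I would push the sum over $k$ to the inside, isolating the factor $\sum_{k=0}^{n-1} \omega_n^{(j-i)k}$. The key step is the orthogonality relation: this finite geometric sum equals $n$ when $i = j$ and $0$ otherwise. When $i = j$ every term is $1$; when $i \ne j$ the ratio $\omega_n^{j-i}$ is an $n$th root of unity distinct from $1$, so the geometric series evaluates to $(\omega_n^{(j-i)n} - 1)/(\omega_n^{j-i} - 1) = 0$ because $\omega_n^{(j-i)n} = 1$. Substituting this back collapses the double sum over $i, j$ onto its diagonal, the two factors of $n$ cancel, and what remains is precisely $\sum_{i=0}^{n-1} x(i) y^*(i) = \langle x, y \rangle$.

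The only point requiring genuine care — and the crux of the argument — is the orthogonality relation for the roots of unity, specifically the handling of the $i \ne j$ case via the geometric-series formula. Everything else is routine bookkeeping with the summation indices and with the conjugation of the complex exponentials, and no step relies on anything beyond the definitions of the DFT and the inner product already introduced.
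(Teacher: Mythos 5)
Your proof is correct: the expansion of $\langle \hat{x},\hat{y}\rangle$, the interchange of summations, and the orthogonality relation $\sum_{k=0}^{n-1}\omega_n^{(j-i)k} = n$ for $i=j$ and $0$ otherwise (justified via the geometric series, valid since $\omega_n^{j-i}\ne 1$ when $0<|i-j|<n$) together give exactly the standard Parseval argument, with the two factors of $\sqrt{n}$ cancelling as claimed. The paper itself states this claim without proof, treating it as a known property of the unitary normalization of the DFT, so there is no alternative argument to compare against; yours is the canonical one and fills the gap completely.
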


If we can factor $n$ as $n = Ns$, two useful DFT pairs which are easy to verify are the following:
\begin{align}
\mathcal{F}\left\{\sum_{i = 0}^{N - 1}\delta\circledast e_{is}\right\}(k) &= \sqrt{\frac{N}s}\sum_{\ell = 0}^{s - 1}\delta(k - \ell N)\label{eq:picket}\\
\phantom{.}\mathcal{F}\left\{\delta\right\}(k) &= \frac1{\sqrt{n}}\label{eq:del_dft}.
\end{align}

We will also be interested in the DFT of $R_x$, as follows:

\begin{definition}[Energy Spectrum]
	The \emph{energy spectrum} of $x$, denoted as $S_x$, is a scaled version of 	the DFT of $R_x$, i.e.,
	\[
	S_x = \frac{1}{\sqrt{n}} \hat{R}_x; \text{ thus }
	S_x(k)  =  \frac{1}{n} \sum_{j = 0}^{n - 1} R_x(j) \omega_n^{-jk}.
	\]
\end{definition}

These definitions allow us to state the following useful result.

\begin{claim} 
	If $n = Ns$, a unit-energy signal $x$ is $s$-quasicyclic shift-orthogonal if and only if
	\[
	\sum_{j = 0}^{s - 1} S_x(k-Nj) =
	\frac1N
	\] for $0 \le k < N$.
\end{claim}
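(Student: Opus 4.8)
The plan is to work entirely from the time-domain characterization of shift-orthonormality in~(\ref{eq:rx}) and transport it to the frequency domain via the DFT. Equation~(\ref{eq:rx}) says that a unit-energy $x$ is $s$-quasicyclic shift-orthonormal precisely when the pointwise product of $R_x$ with the picket-fence signal $p = \sum_{i=0}^{N-1}\delta\circledast e_{is}$ equals $\delta$. Indeed, multiplying $R_x$ by $p$ keeps $R_x$ at the indices $0, s, \dots, (N-1)s$ and zeros it elsewhere, so forcing the result to equal $\delta$ is exactly the requirement that $R_x(0)=1$ and $R_x(ks)=0$ for $k\neq 0$. The entire claim therefore reduces to showing that the single identity $R_x\, p = \delta$ is equivalent to the stated spectral condition.

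First I would apply $\mathcal{F}$ to both sides of $R_x\, p = \delta$. Since the DFT turns a pointwise product into a scaled circular convolution, $\mathcal{F}(R_x\, p) = \tfrac{1}{\sqrt n}\,\hat R_x \circledast \hat p$, and by Claim~\ref{claim:DFTisUnitary} the transform is invertible, so this step loses no information and yields an equivalence in both directions. I would then substitute the three transforms already available: $\hat R_x = \sqrt{n}\,S_x$ from the definition of the energy spectrum, $\hat p$ from~(\ref{eq:picket}), and $\hat\delta = 1/\sqrt n$ from~(\ref{eq:del_dft}). After cancelling the $\sqrt n$ factors this collapses to $S_x \circledast \hat p = 1/\sqrt n$.

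The convolution is easy to evaluate because $\hat p$ is itself a frequency-domain picket fence, $\hat p(k) = \sqrt{N/s}\sum_{\ell=0}^{s-1}\delta(k-\ell N)$, so that $(S_x\circledast\hat p)(k) = \sqrt{N/s}\sum_{\ell=0}^{s-1} S_x(k-\ell N)$. Equating this to $1/\sqrt n = 1/\sqrt{Ns}$ and solving for the sum leaves exactly $\sum_{\ell=0}^{s-1} S_x(k-\ell N) = 1/N$, the claimed identity. To justify restricting to $0\le k<N$, I would observe that as $\ell$ runs over $0,\dots,s-1$ the indices $k-\ell N \bmod n$ sweep out every residue congruent to $k$ modulo $N$, so the left-hand sum depends only on $k\bmod N$ and is automatically $N$-periodic; checking it on one period therefore checks it everywhere.

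I do not expect any single step to be a serious obstacle; the main care is bookkeeping. The delicate points are getting the DFT normalization right --- confirming $\mathcal{F}(fg)=\tfrac1{\sqrt n}\hat f\circledast\hat g$ under the chosen unitary convention and that the constants reduce cleanly to $1/N$ --- and being explicit that the equivalence runs in both directions. The ``only if'' direction carries $R_x\,p=\delta$ forward to the spectral condition, while the ``if'' direction inverts the DFT to recover $R_x\,p=\delta$, which simultaneously delivers $R_x(0)=\|x\|^2=1$ (the unit-energy normalization) and the orthogonality $R_x(ks)=0$.
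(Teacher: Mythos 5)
Your proposal is correct and follows essentially the same route as the paper's own (much terser) proof: start from the time-domain characterization $R_x\,p=\delta$ in~(\ref{eq:rx}), take the DFT of both sides, and apply~(\ref{eq:picket}) and~(\ref{eq:del_dft}). You have simply filled in the bookkeeping (the product-to-convolution identity, the normalization constants, and the $N$-periodicity of the sum) that the paper leaves implicit.
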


\begin{proof}
Using~(\ref{eq:rx}), we know that $x$ is quasi-cyclic shift-orthonormal if and only if $R_x \sum_{i = 0}^{N - 1}\delta\circledast e_{is} = \delta$. 
Taking a Fourier transform on each side and applying~(\ref{eq:picket}) and~(\ref{eq:del_dft}) gives the desired result.
\end{proof}

Note that this claim is a frequency-domain version of Nyquist's criterion.

Since $(S_x \circledast \delta)(ks)$ is equal to the total energy of the components of $S_x$ in the set $k + \ideal{N} = \{k, k + N, k + 2N, \dots\}$ this means that a signal $x$ of unit energy is $s$-quasicyclic shift-orthogonal if and only if this total energy is $1/N$ for every $k$.  Stated more precisely, the signal $\hat{x}$ has the orthogonal decomposition 
\begin{equation}\label{eq:orthdecomp}
	\hat{x} = \sum_{t = 0}^{N - 1} \proj_{\mathbb{C}^{t + \ideal{N}}}(\hat{x})
	= \sum_{t = 0}^{N - 1} \hat{x}_{t + \ideal{N}}
\end{equation}
where $\mathbb{C}^{t + \ideal{N}}$ is the $s$-dimensional subspace of $\mathbb{C}^n$ corresponding to the indices in the set $t + \ideal{N}$, and $\hat{x}_{t + \ideal{N}}$ is the projection of $\hat{x}$ onto this space.

This leads to the following useful corollary:

\begin{corollary}
\label{corr:constcoset}
The signal $x$ is $s$-quasicyclic shift-orthonormal if and only if every component of the orthogonal decomposition in~(\ref{eq:orthdecomp}) has the same energy $1/N$, i.e., if and only if
	\begin{equation}
		\| \hat{x}_{t + \ideal{N}} \|^2 = \frac1{N}
	\end{equation}
for $0 \le t < N$.
\end{corollary}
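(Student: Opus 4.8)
The plan is to reduce the corollary directly to the preceding claim (the frequency-domain Nyquist criterion) by recognizing that the per-coset energies $\|\hat{x}_{t+\ideal{N}}\|^2$ are precisely the aliased sums of the energy spectrum appearing in that claim. The bridge between the two statements is the identity $S_x(k) = |\hat{x}(k)|^2$, which I would establish first.

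First I would verify $S_x = |\hat{x}|^2$ by direct computation (the discrete Wiener--Khinchin relation). Starting from $R_x(j) = \sum_i x(i)x^*(i-j)$ and taking the DFT, the substitution $\ell = i-j$ factors the double sum into $\hat{x}(k)$ and its conjugate, yielding $\hat{R}_x(k) = \sqrt{n}\,|\hat{x}(k)|^2$, hence $S_x(k) = |\hat{x}(k)|^2$ after the $1/\sqrt{n}$ scaling in the definition of $S_x$. This confirms the interpretation of $S_x(k)$ as the energy of the $k$th frequency component of $\hat{x}$ that was used informally in the discussion preceding the corollary. Next I would identify the coset energy: since $n = Ns$, the coset $t + \ideal{N}$ consists of exactly the $s$ indices $t, t+N, \ldots, t+(s-1)N$ taken modulo $n$, and $\mathbb{C}^{t+\ideal{N}}$ is the coordinate subspace on these indices with orthonormal basis $\{e_{t+jN}\}_{j=0}^{s-1}$, so
\[
\|\hat{x}_{t+\ideal{N}}\|^2 = \sum_{j=0}^{s-1} |\hat{x}(t+jN)|^2 = \sum_{j=0}^{s-1} S_x(t+jN).
\]
Reindexing (all $s$ indices cover the same coset modulo $n$, since $-N \equiv (s-1)N \pmod n$), this equals $\sum_{j=0}^{s-1} S_x(t - Nj)$, which is exactly the left-hand side of the claim with $k = t$.

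With these two facts in hand, both directions follow. For the forward direction, shift-orthonormality of $x$ forces $R_x(0) = \|x\|^2 = 1$, so $x$ has unit energy and the claim applies, giving $\sum_{j} S_x(t-Nj) = 1/N$, i.e., $\|\hat{x}_{t+\ideal{N}}\|^2 = 1/N$ for every $t$. For the converse, I would first recover the unit-energy hypothesis that the claim requires: summing the assumed per-coset energies over $t$ and using the orthogonal decomposition together with unitarity of the DFT gives $\|x\|^2 = \|\hat{x}\|^2 = \sum_{t=0}^{N-1}\|\hat{x}_{t+\ideal{N}}\|^2 = N \cdot \tfrac1N = 1$. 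Only then can the backward direction of the claim be invoked to conclude that $x$ is shift-orthonormal.

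The only delicate point is this last step: the claim is stated for unit-energy signals, whereas the corollary omits that hypothesis (it is merely implied by shift-orthonormality on the forward side), so the converse cannot cite the claim until unit energy has been derived from the uniform coset-energy condition. Establishing $S_x = |\hat{x}|^2$ is routine and the coset identification is purely combinatorial, so this bookkeeping about the energy normalization is the main thing to get right.
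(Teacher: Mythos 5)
Your proposal is correct and follows essentially the same route as the paper: the corollary is obtained by combining the preceding claim with the identification of the coset energies $\|\hat{x}_{t+\ideal{N}}\|^2$ as the aliased sums $\sum_j S_x(t-Nj)$ via $S_x = |\hat{x}|^2$. Your explicit handling of the unit-energy hypothesis (deriving $\|x\|^2=1$ from the uniform coset energies before invoking the converse of the claim) is a detail the paper leaves implicit, and you are right to flag it.
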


\subsection{Derivation of QPCA}

\noindent We may now reformulate QPCA by expressing the projection onto the span in~(\ref{eq:qpca}) in terms of inner products with the $N$ circular shifts of $q$.
\begin{align}
q^{(1)} &= \arg \max_{q \in Q^{n}(s)} \sum_{i=1}^m
\sum_{j = 0}^{N - 1}| \langle y_i, q \circledast e_{js} \rangle|^2 \\
        &= \arg \max_{q \in Q^{n}(s)} \sum_{i=1}^m
\sum_{j = 0}^{N - 1}| \langle y_i\circledast{e_{js}},q \rangle|^2. 
\label{eq:qpca_prod}
\end{align}
Note that this formulation of QPCA is similar in nature to a standard PCA problem but with respect to an augmented data set $\bigcup_{i=1}^m \bigcup_{j = 0}^{N - 1} \{ y_i \circledast e_{js} \}$ instead of only the $y_i$s.  However, rather than allowing a principal component to be an \emph{arbitrary} signal of unit norm, in this new problem $q^{(1)}$ is required to be $s$-quasicyclic shift-orthonormal, i.e., an element of $Q^n(s)$.

In light of Claim~\ref{claim:DFTisUnitary}, and since elements of $Q^n(s)$ have a convenient energy spectrum characterization, we now pose QPCA in the DFT domain.  Denote by $\hat{Q}^n(s)$ the set of DFTs of elements of $Q^n(s)$, i.e., $\hat{Q}^n(s) = \{ \hat{q} = \mathcal{F}\{q\} : q \in Q^n(s) \}$.  Taking the DFT of each of the centered data vectors, we obtain the sequence $\{ \hat{y}_1, \ldots, \hat{y}_m\}$.  Letting $\hat{e}_{js} = \mathcal{F}\{e_{js}\}$ we seek a signal $\hat{q}^{(1)} \in \hat{Q}^n(s)$ satisfying
\begin{align}
\hat{q}^{(1)} & = \arg \max_{\hat{q} \in \hat{Q}^n(s)}
\sum_{i=1}^m \sum_{j = 0}^{N - 1}
| \langle \sqrt{n}\hat{y}_i\hat{e}_{js},\hat{q} \rangle |^2
\nonumber \\
&= 
\arg \max_{\hat{q} \in \hat{Q}^n(s)}
\sum_{i=1}^m \sum_{j = 0}^{N - 1}
\| \proj_{\hat{q}}( \sqrt{n}\hat{y}_i\hat{e}_{js}) \|^2.
\label{eq:dftqpca}
\end{align}
Again, this problem is similar in nature to a standard PCA problem with an augmented data set $\bigcup_{i=1}^m \bigcup_{j = 0}^{N - 1} \{ \sqrt{n}\hat{y}_i\}$, except that $\hat{q}^{(1)}$ is required to be an element of $\hat{Q}^n(s)$, rather than any vector, as in~(\ref{eq:pca}).

Now we use~(\ref{eq:orthdecomp}) to write our expression in terms of the orthogonal decomposition
$\hat{q} = \sum_{t = 0}^{N - 1} \hat{q}_{t + \ideal{N}}$.  The projection $\proj_{\hat{q}}(z)$ of an arbitrary signal $z \in \mathbb{C}^n$ on $\lspan(\{ \hat{q} \})$ then has the orthogonal decomposition
\[
\proj_{\hat{q}}(z) = \sum_{t = 0}^{N - 1} \proj_{\hat{q}_{t + \ideal{N}}}(z).
\]
Recall from Corollary~\ref{corr:constcoset} that $\hat{q} \in \hat{Q}^{n}(s)$ if and only if $\| \hat{q}_{t + \ideal{N}} \|^2 = \frac{1}{N}$ for all $t$.  Thus, assuming that $\hat{q} \in \hat{Q}^{n}(s)$, we have
\begin{align*}
\proj_{\hat{q}_{t + \ideal{N}}}(z)
&= \frac{ \langle z, \hat{q}_{t + \ideal{N}} \rangle}{
\langle \hat{q}_{t + \ideal{N}}, \hat{q}_{t + \ideal{N}}
\rangle} \hat{q}_{t + \ideal{N}}\\
&= N \cdot \langle z, \hat{q}_{t + \ideal{N}}
\rangle \hat{q}_{t + \ideal{N}},
\end{align*}
which has energy
\begin{align*}
\| \proj_{\hat{q}_{t + \ideal{N}}}(z) \|^2 &= 
N^2 \cdot | \langle z, \hat{q}_{t + \ideal{N}} \rangle|^2
\cdot
\| \hat{q}_{t + \ideal{N}} \|^2\\
&= N \cdot | \langle z, \hat{q}_{t + \ideal{N}} \rangle|^2.
\end{align*}
Thus, for $\hat{q} \in \hat{Q}^{n}(s)$, we get
\begin{align*}
\|\proj_{\hat{q}}(z)\|^2 &=
N \sum_{t  = 0}^{N - 1} |
\langle z, \hat{q}_{t + \ideal{N}} \rangle|^2\\
&= N \sum_{t = 0}^{N - 1} |
\langle z_{t + \ideal{N}}, \hat{q}_{t + \ideal{N}} \rangle|^2.
\end{align*}
The objective function in the optimization problem (\ref{eq:dftqpca}) thus decomposes as
\[
N \sum_{t = 0}^{N - 1} \left (
\sum_{i=1}^m \sum_{j = 0}^{N - 1}
| \langle
( \sqrt{n}\hat{y}_i\hat{e}_{js})_{t + \ideal{N}},
\hat{q}_{t + \ideal{N}} \rangle |^2 \right),
\]
Terms corresponding to a fixed $t$ can be maximized independently, subject only to the constraint that $\| \hat{q}_{t + \ideal{N}} \|^2 = 1/N$ for each $t$.   Maximization of the term corresponding to $t$ is a standard PCA problem in the subspace $\mathbb{C}^{t + \ideal{N}}$, posed as
\begin{align}\label{eq:qhat}
\hat{q}^{(1)}_{t + \ideal{N}}
&= \arg\max_{ \substack{ \hat{q} \in \mathbb{C}^{t + \ideal{N}}\\
\| \hat{q} \|^2 = \frac1N}}
\sum_{i=1}^m \sum_{j = 0}^{N - 1}
| \langle
( \sqrt{n}\hat{y}_i\hat{e}_{js})_{t + \ideal{N}},
q \rangle |^2 \\
&= \frac1{\sqrt{N}}\arg\max_{ \substack{ \hat{q} \in \mathbb{C}^{t + \ideal{N}}\\
		\| \hat{q} \|^2 = 1}}
\sum_{k = 0}^{mN - 1}
| \langle
(\hat{z}_k)_{t + \ideal{N}},
\hat{q} \rangle |^2 \label{eq:qhat_z}
\end{align}
where we have introduced the \emph{augmented data} $\hat{z}_{sj + (i - 1)} = \hat{y}_i\hat{e}_{js}$ to better emphasize the decomposition into regular PCA problems.  An overall QPCA solution in the DFT domain is then obtained as
\begin{equation}\label{eq:qhatFull}
\hat{q}^{(1)} = \sum_{t = 0}^{N - 1} 
\hat{q}^{(1)}_{t + \ideal{N}},    
\end{equation}
from which time-domain solution is obtained as $q^{(1)} = \mathcal{F}^{-1}\left(\hat{q}^{(1)}\right)$.  That is, QPCA on $m$ data vectors of length $n$ reduces to $N$ instances of PCA on $Nm$ data vectors of length~$s$.

\subsection{Non-integer Oversampling Rate}
\label{sec:resample}

\noindent So far, we have assumed that our oversampling rate $s$ is integral, i.e., that we have an integer number of samples per symbol period.  In practice, this need not be the case.  Consider, for instance, the case where the data vectors $y_i$ were sampled from continuous functions with symbol period $T$ with sampling interval $1/\tilde{s}$ that does not divide $T$.

In this case, we proceed as follows: We assume the rate $\tilde{s}$ obeys the Nyquist criterion: in practise, its value may in fact be estimated from the bandwidth of the data, as described above.  Using sinc interpolation, we can construct the following band-limited continuous functions:
\[
\phantom{.}Y_i(t) = \sum_{j = 0}^{n - 1} y_i(j)\sinc(\tilde{s}t - n).
\]
These $Y_i$s can now be sampled at any desired rate $s$ that obeys the Nyquist criterion, giving us new data vectors defined by
\[
y_i^\prime(j) = Y_i(j/s)
\]
for $0 \le j \le \lfloor n/\tilde{s}\rfloor s$ (to correspond with the same total time interval).  If we choose integral $s$, we can then perform QPCA on the $y_i^\prime$s as usual.

\subsection{Lack of Uniqueness}

\noindent It is worth noting that the solution to a PCA problem is not unique.  Since PCA involves maximizing a sum of $|\langle y_i, w\rangle|^2$s where $\|w\| = 1$, it follows that if $w$ is an optimum, so too is $w^\prime = e^{\mathbf{i}\phi}w$ for any real $\phi$.  (In the case of real $w$, this is equivalent to $\pm w$.)  Since QPCA reduces to $N$ instances of regular PCA, this gives us $N$ degrees of freedom, one for each $\hat{q}_{t + \ideal{N}}$.  In particular, if we have a solution in the frequency domain that decomposes as $\hat{q}^{(1)} = \sum_{t = 0}^{N - 1} \hat{q}^{(1)}_{t + \ideal{N}}$, as per~(\ref{eq:qhatFull}), any vector of the form
\begin{equation}\label{eq:nonunique}
\sum_{t = 0}^{N - 1} e^{\mathbf{i}\phi_t}\hat{q}^{(1)}_{t + \ideal{N}}
\end{equation} 
will also be a solution, for any choice of the $N$ real values~$\phi_t$.

Selecting the values of the $\phi_t$s depends on the specific application for QPCA.  One option, consistent with many pre-existing eigendecomposition software routines (e.g., Julia's \texttt{LinearAlgebra: eigen}) is to select the value of $\phi$ that makes the first non-zero element of $w^\prime$ a positive real number.  Another option is to choose the values that make $\hat{q}^{(1)}$ as localized as possible in the time domain.  We will examine some of these options in the examples below.

\subsection{Algorithmic Description}

\noindent The QPCA algorithm is given in Algorithm~\ref{alg:qpca}.  The algorithm proceeds as follows:  First, we calculate the augmented data $\hat{z}$ in terms of the input data.  Next, we perform PCA on each of the appropriate subsets of $\hat{z}$, as in~(\ref{eq:qhat_z}).  Here, $\mathsf{PCA}(x)$ denotes the first standard PCA component of the data $x$.  Finally, we reassemble these results as in~(\ref{eq:qhatFull}) and transform back to the time domain.

\begin{algorithm2e}
\caption{The QPCA Algorithm.}\label{alg:qpca}
\KwData{$N, s, m \in \mathbb{N}$, $x_1, \dots, x_m \in \mathbb{C}^{Ns}$}
\KwResult{$q^{(1)} \in \mathbb{C}^{Ns}$}
\tcc{Calculate DFTs of centred data.}
$\bar{x} \gets \frac1m\sum_{i = 1}^m x_i$\\
\For{$i \gets 1$ \KwTo $m$}{
$\hat{y}_i \gets \mathcal{F}(x_i - \bar{x})$
}
\tcc{Assemble data for PCAs}
\For{$i \gets 1$ \KwTo $m$}{
    \For{$j \gets 0$ \KwTo $N - 1$}{
        \For{$k \gets 0$ \KwTo $Ns - 1$}{
            $\hat{z}[i + mj][k + 1] \gets e^{-2\pi\mathbf{i}jk/N}\hat{y}_i[k + 1]$
        }
    }
}
\tcc{Perform PCAs}
\For{$t \gets 0$ \KwTo $N - 1$}{
    $\hat{z}^t \gets \hat{z}[:][t + 1, t + N + 1, t + 2N + 1, \dots]$\\
    $\hat{q}[t + 1, t + N + 1, t + 2N + 1, \dots] = \sqrt{N}^{-1}\mathsf{PCA}(\hat{z}^t)$\\
}
\KwRet{$\mathcal{F}^{-1}(\hat{q})$}
\end{algorithm2e}


The computational complexity of calculating the first PCA component in an $n\times m$ problem is $O(nm)$, using the power iteration algorithm for eigendecomposition, and assuming it converges quickly (i.e., that the largest-magnitude eigenvalue dominates).  We perform PCA $N$ times on $Nm\times s$ data, for a complexity of $O(N^2m)$, as $s$ is constant.  These factors dominate the complexities of all other operations (e.g., calculating DFTs).

\section{Examples and Applications}
\label{sec:apps}

\noindent In this section, we focus on three applications of QPCA in communications engineering.  In each case, we send $m$ data vectors of $N$ symbols, discretized with an oversampling rate of $s$ samples per symbols.  Our examples make use of our Julia implementation of QPCA, which we have made available on GitHub~\cite{rumsey24}.

\subsection{Recovering a Dominant Pulse}

\noindent In our first example, the data consists of sums of pairs of signals output from a modulation system.  In each continuous-time system, 16QAM data is modulated using root-raised-cosine (RRC) pulses $\psi_\alpha$.  Such pulses have the following continuous-time-domain form for $\tau\in\mathbb{R}$:
\[
\phantom{.}\psi_\alpha(\tau) = \frac{\sin\left[\pi\tau(1 - \alpha)\right] + 4\alpha\tau\cos\left[\pi\tau(1 + \alpha)\right]}{\pi\tau\left[1 - \left(4\alpha\tau\right)^2\right]}.
\]
The parameter $\alpha$ is known as the \emph{roll-off factor}.  This expression is defined by continuity at $\tau = 0$ and $\tau = \pm 1/(4\alpha)$, i.e.,
\begin{align*}
	\psi_\alpha(0) &= 1 + \alpha\left(\frac4\pi - 1\right)\\
	\phantom{.}\psi_\alpha\left(\pm\frac1{4\alpha}\right) &= \alpha\left[\sin\left(\frac{\pi(1 + \alpha)}{4\alpha}\right) - \frac2\pi\cos\left(\frac{\pi(1 + \alpha)}{4\alpha}\right)\right].
\end{align*}  We choose the RRC pulse because the convolution of two of these pulses (e.g., when matched filtering) is shift orthonormal under integer shifts (i.e., with a unit symbol period).

Our received signals are of the form
\begin{multline}\label{eq:mix}
Y_i(\tau) = \sqrt{P_1}\sum_{j = 1}^{N}a_i(j)\psi_{\alpha_1}(\tau - i) +\\ \sqrt{P_2}\sum_{j = 1}^{N}b_i(j)\psi_{\alpha_2}(\tau - i - T) + w(\tau)
\end{multline}
where each of the $a_i$s and $b_i$s is a vector of $N$ QAM symbols in each of the two systems (chosen uniformly and independently, and normalized so that the constellation has average unit power).  We add white Gaussian noise to each sample, realized as $w(\tau)$.  Each system has a different RRC roll-off factor $\alpha_j$, a different average symbol power $P_j$ and $N$ symbols.  The systems have the same symbol period, but are offset from each other by a time-shift~$T$.

We discretize these pulses at oversampling rate $s$ to obtain data vectors $y_i$, defined by $y_i(j) = Y_i(j/s)$ for $j \in\{0, 1, \dots, n\}$, where $n = Ns$.
We apply QPCA to a dataset of $m = 100$ of these vectors for which $\alpha_1 = 0.04$, $\alpha_2 = 0.9$, and $N = 81$.  Our discrete time axis has an oversampling ratio of $s = 9$ samples per symbol period, for a total of $n = 729$ samples per signal.  (That is, our time axis is $\{0, 1/s, 2/s, \dots, \}$.)  A shift of $T = 5$ samples is added to the second system.  In this setup, we examine a variety of $(P_1, P_2)$ pairs, as shown in Figure~\ref{fig:ex1}.

We expect the first QPCA components to be dominated by the pulse that defines the higher-power term in~(\ref{eq:mix}).  This is reflected in the example output spectra shown in Figure~\ref{fig:q1}.  In the $(P_1, P_2) = (0, 1)$ case, we are only sending data with the high roll-off factor $\alpha_2 = 0.9$.  The corresponding pulse has a near-rectangular spectrum, which we see in the corresponding plot of $|\hat{q}^{(1)}|$.  Similarly, when $(P_1, P_2) = (1, 0)$, we are only sending data with the low roll-off factor $\alpha_1 = 0.04$.  Here, the pulse spectrum is close to a lobe of a cosine, which we also see in the figure.  The intermediate $(P_1, P_2)$ pairs correspond to $\hat{q}^{(1)}$ pulses lying between these extremes.

In Figure~\ref{fig:q2}, we see examples of $|\hat{q}^{(2)}|$.  For the extreme cases $(P_1, P_2) \in \{(0, 1), (1, 0)\}$, the secondary pulses  $q^{(2)}$ consist only of out-of-band noise, since the primary pulse $q^{(1)}$ capture all of the data.  The other cases give intermediate results, with the second components having greater magnitudes at frequencies that are more represented in the non-dominant pulse.

\begin{figure}[!t]
    \centering
    \begin{subfigure}[b]{0.4\textwidth}
    	\centering
    	\includegraphics[width=\linewidth]{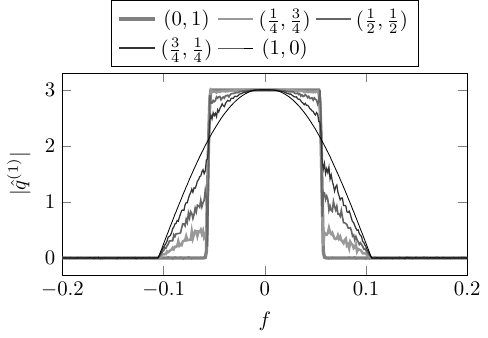}
    	\caption{First QPCA component.}
    	\label{fig:q1}
    \end{subfigure}
	\vspace*{20pt}
	
    \begin{subfigure}[b]{0.4\textwidth}
    	\centering
    	\includegraphics[width=\linewidth]{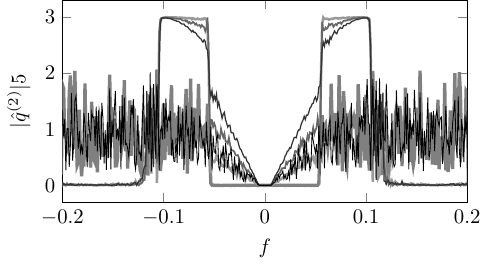}
    	\caption{Second QPCA component}
    	\label{fig:q2}
    \end{subfigure}
    \caption{Absolute values of output spectra of first two QPCA components for a variety of mixtures of pulses, listed as $(P_1, P_2)$.}
    \label{fig:ex1}
\end{figure}

Figure~\ref{fig:ex1} does not show the phase of the results, which are only defined up to the phase ambiguity in~(\ref{eq:nonunique}).  In the case of this example, the underlying pulses RRC are real even functions of time.  This means that their Fourier transforms are real even functions of frequency.  The implication is that our spectra with absolute values (i.e., with all phases zero) are in fact solutions.

\subsection{Symbol Period Estimation}

\noindent In a practical setting, the QPCA user may not know the symbol period (i.e., $s$) \emph{a priori}.  In this section, we investigate some approaches to determine this period.

What constitutes a good choice of $s$ may depend on the application, but a reasonable metric is to choose an $s$ that maximizes the ratio of the energies in the first two QPCA components, i.e., the ratio of the dominant eigenvalues of the underlying PCA problems.  This is a good choice as it corresponds to the case where the signal energy is most concentrated in one component family.

To illustrate the performance of this metric, we send 16QAM symbols with a setup similar to that of the previous example: RRC pulses with a rolloff factor of $\alpha = 0.5$, $N = 100$ symbols per run, and $m = 100$ total runs.  The transmitter has an oversampling rate of $s=9$ samples per symbol.  None of $\alpha$, $N$, and $s$ is known to the receiver.  Our symbols are chosen to have unit power on average, and we add white Gaussian noise with variance $\sigma=0.1$.

We consider a range of values of $s$, truncating the data to multiples of $s$ as necessary.  For each candidate value of $s$, we perform QPCA, and examine the resulting principal component $q^{(1)}$, as well as the fraction $\lambda_1$ of the total energy captured by $q^{(1)}$ and its family of $s$-shifts.  We repeat for $q^{(2)}$, whose family  captures fraction $\lambda_2$, in order to obtain our metric $\lambda_1/\lambda_2$.  Figure~\ref{fig:ex2eig} shows this ratio for a range of values of $s$.  Figure~\ref{fig:ex2pulses} shows some examples of $|q^{(1)}|$, corresponding to different values of~$s$.

\begin{figure}[!t]
	\centering
	\includegraphics[width=\linewidth]{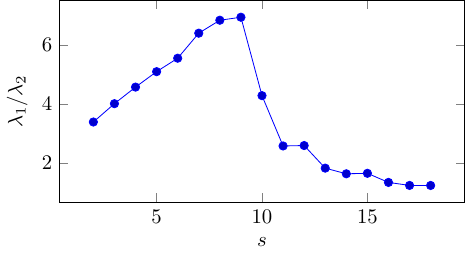}
	\caption{Ratio of first to second eigenvalues for each candidate value of $s$.}
	\label{fig:ex2eig}
\end{figure}

\begin{figure}[ht]
	\centering
	\includegraphics[width=\linewidth]{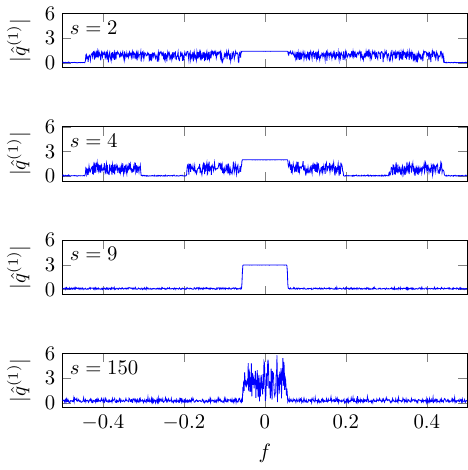}
	\caption{Primary pulses returned by QPCA for different values of $s$ in Example~2.}
	\label{fig:ex2pulses}
\end{figure}

These figures illustrate several points.  First, we see in Figure~\ref{fig:ex2eig} that the ratio of eigenvalues peaks at the correct value of $s=9$.  This is where we have the best tradeoff between the first pulse capturing as much signal and as little noise as possible.  This falls off quickly as $s$ increases, with the energy being distributed among an increasing number of component families.  Second, we see in Figure~\ref{fig:ex2pulses} that the support of the pulses is larger for $s < 9$.  As $s$ increases beyond this point, the pulses change shape, but the support remains the same.

If, as will often be the case, the underlying pulse is band-limited, the value of $s$ can also be estimated from the bandwidth of the signal.  For instance, in this case, we can estimate the signal bandwidth at 1/9 the total bandwidth (in the absence of noise, this estimation is exact).  This means that we should concentrate our search near $s = 9$.

\subsection{Recomputing the Oversampling Factor}

\noindent Our final example concerns data which has been sampled at a rate that does not divide the symbol period.  In particular, we look at signals of the form
\[
Y(\tau) = \sum_{i = 1}^N a_i\psi_{\alpha}(\tau - i)
\]
sampled as $y(j) = Y(j/s)$ for a non-integer $s$.

In our experiment, we take $\alpha = 0$, $N = 100$, and $s=8.5$.  This means that our $y_i$s have length $\lfloor sN\rfloor = 850$.  Examining the bandwidths of the signals suggests using QPCA with integer $s$ chosen to be 8 or 9. 

\begin{figure}[!t]
	\centering
	\begin{subfigure}[b]{0.4\textwidth}
		\centering
		\includegraphics[width=\textwidth]{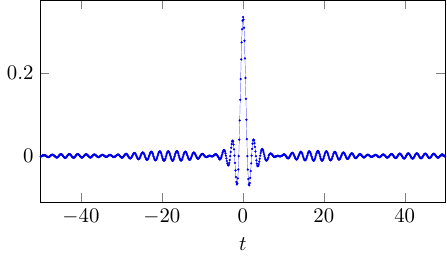}
		\caption{Pulse $q^{(1)}$ obtained from RRC data sampled at 8.5 samples per symbol period.  Here, $\lambda_1 = 94.6\%$.}
		\label{fig:badS}
	\end{subfigure}
	\hfill
	\begin{subfigure}[b]{0.4\textwidth}
		\centering
		\includegraphics[width=\textwidth]{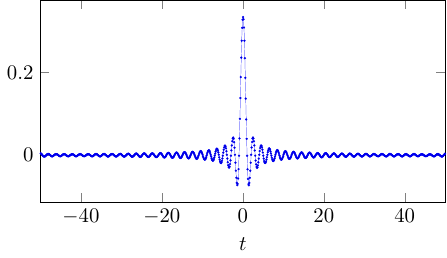}
		\caption{Pulse $q^{(1)}$ obtained from same RRC data resampled at 9 samples per symbol period.  Here, $\lambda_1 = 99.1\%$.}
		\label{fig:goodS}
	\end{subfigure}
	\caption{Comparison of pulses obtained when sampling RRC data with a non-integer oversampling rate, and the same data resampled at an appropriate integer rate.}
	\label{fig:resample}
\end{figure} 

If we perform QPCA on this data with $s=9$, we obtain a first component pulse $q^{(1)}$ shown in Figure~\ref{fig:badS}.  This is a poor approximation of the underlying RRC pulse, and the QPCA analysis tells us that this pulse and its shifts capture only $\lambda=94.6\%$ of the available energy.

On the other hand, we can resample the data using the method described in Section~\ref{sec:resample} so that each symbol period now includes an integer number of samples.  We take $s = 9$, which is the next largest integer (which we can estimate from the signal bandwidth).  In this case, the first component pulse $q^5{(1)}$ shown in Figure~\ref{fig:goodS}.  This pulse has a much more RRC pulse-like shape, and its family captures $\lambda_1 = 99.1\%$ of the available energy.  (The lost 0.9\% is the result of the combination of randomness in the choice of symbols and truncation effects from the pulse.)  This shows the importance and effectiveness of resampling data which was not sampled at an integer oversampling rate.

\section{Conclusion and Future Work}

\noindent We have developed quasi-cyclic principal component analysis (QPCA), a statistical tool for decomposing data into a representation formed as a linear combination of shift-orthogonal signals.  We have also provided three examples that illustrate the use of this tool.  Future work on this algorithm may include extending QPCA to other spaces: one possibility is to examine data vectors taken from two dimensional spaces, which has the potential to be useful in, for example, identifying repeating tiling patterns when image processing.

\bibliographystyle{IEEEtran}
\bibliography{references}

\end{document}